\setlist[enumerate,1]{font=\upshape, itemsep=.5ex}\setlist[itemize,1]{font=\upshape, itemsep=.5ex}
\def\Z{{\mathbb Z}}
\def\Q{{\mathbb Q}}
\def\calt{\mathcal{T}}
\def\calh{\mathcal{H}}
\def\calc{\mathcal{C}}
\def\calm{\mathcal{M}}
\def\calj{\mathcal{J}}
\def\cala{\mathcal{A}}
\def\hp{\calh_{(p)}}
\newcommand{\compactlist}{\begin{list}{\enumerate}{\setlength{\leftmargin}{1em}}}
\def\cs{\mathbin{\#}}
\def\co{\colon\thinspace}
\def\ot{\overline{\tau}}
\theoremstyle{theorem}
\newtheorem{theorem}{Theorem}
\newtheorem{lemma}[theorem]{Lemma}
\newtheorem{corollary}[theorem]{Corollary}
\theoremstyle{definition}
\def\MR#1{}
\begin{document}
\title[Rank-expanding satellite operators]{Rank-expanding satellite operators on the topological knot concordance group}

\author{Charles Livingston}
\address{Charles Livingston: Department of Mathematics, Indiana University, Bloomington, IN 47405}\email{livingst@indiana.edu}



\begin{abstract}  Given a  fixed knot  $P \subset S^1 \times B^2$ and any knot $K \subset S^3$, one can form the satellite of $K$ with pattern $P$.  This operation  induces a self-map of  the  concordance group of knots in $S^3$.  It has been proved by Dai, Hedden, Mallick, and Stoffregen that in the smooth category there exist    $P$  for which this function is rank-expanding; that is, for some $K$, the set $\{P(nK)\}_{n\in \Z}$ generates an infinite rank subgroup.  Here we demonstrate that similar examples exist in the case of the  topological locally flat concordance group.   Such examples cannot exist in the algebraic concordance group. \end{abstract}

\maketitle


\section{Main theorem} \label{sec:intro}

Let $P\subset S^1 \times B^2$ be a knot.  For any knot $K \subset S^3$, let $P(K)$ denote the satellite of $K$ formed using $P$ as the {\it pattern}.  For $* = s, t $ or $a$, $P$ induces a map  $P_* \co \calc_* \to \calc_*$, where $\calc_*$ is either the smooth, topological, or algebraic knot concordance group.  These maps commute with the natural surjections $\calc_s \to \calc_t \to \calc_a$.

Let $M$ be a $\Z$--module.  The {\it rank} of $M$ is the dimension of the vector space $M\otimes_\Z \Q$.
A   function $\psi\co M \to M$   is called {\it rank-expanding} if there exists an $m \in   M$ for which $\psi(\left<m \right>)$
generates an  infinite rank submodule.     This is equivalent to the condition that there exists an $m \in M$ such that $\psi(\left< m \right>)$
contains an infinite linearly independent subset.

Questions related to determining whether  maps $P_s$ as defined above are rank-expanding  appeared  in~\cite{MR4270665}. The problem came into full focus in~\cite{2022arXiv220907512D}.  Both of these papers applied only in the smooth category.  The  main results of~\cite{2022arXiv220907512D} demonstrated that for large families of $P$, the maps  $P_s$ are rank-expanding.  A deeper aspect of that work is that it   yielded results when restricted to $\calt \subset \calc_s$, the concordance group of topologically slice knots.  Other references about  concordance maps  induced by   satellite operations include~\cite{MR3286894, 2022arXiv220905400D, MR4299664, MR3493412, MR3589337, MR827271, MR534409, 2022arXiv220714198L, 2019arXiv191003461M, 2020arXiv201011277C}.

In contrast to the situation  in the smooth category, in the algebraic setting $P_a$ is never rank-expanding.  In fact,    $P_a$  is affine; if we denote by $[K]$ the algebraic concordance class of $K$, then $P_a([K]) = [P(U)] +  \phi([K])$,  where $\phi$ is a homomorphism that depends  only on the winding number.   This fact follows readily from results of Seifert~\cite{MR0035436} and the definition  of the algebraic concordance group in terms of Seifert matrices~\cite{MR246314}.    As a consequence, if  a subgroup $G\subset \calc_a$ is generated by a set of $k$ elements, then    $P_a(G)$ generates a subgroup of rank at most $k+1$.

In this paper we will work in the topological locally flat category and  use Casson-Gordon theory to  prove that $P_t$ can be rank-expanding.

To state the main result, we need to set up some notation.     For a knot $P\subset S^1 \times B^2$ we let   $M_2(P(K))$ denote the 2--fold cover of $S^3$ branched over $P(K)$.
The Levine-Tristram signature function~\cite{MR0248854,MR0253348}  of  a knot $K$ evaluated at $\omega = e^{2 k  \pi i /p}$ is denoted $\sigma_{k/p}(K)$.   In terms of a Seifert matrix $V$ for $K$, it is the signature of $(1-\omega)V + (1 - \overline{\omega})V^{\sf T}$.

View $S^1 \times B^2 $ as $S^3 \setminus \nu(V)$ for some unknot $V$ in $S^3$, where $\nu(V)$ denotes the interior of a closed tubular neighborhood of $V$.  If $P$ has even winding number then the preimage of $V$ in $M_2(P(U))$ consists of  a pair of curves which we denote $V_1(P)$ and $V_2(P)$.

\begin{theorem}[Main Theorem]
\label{thm:main}  Let $P$ be an even winding number knot in $S^1 \times B^2$ satisfying   $H_1(M_2(P(U))) \cong \Z_n$ for some $n>1$ and assume that $V_1(P)$ generates $H_1(M_2(P(U))$.    Let $ \calj  = \{J_i\}$ be an infinite set of knots satisfying $\max_{1 \le j < n/2} \{\sigma_{j/n}(J_i)\} < \min_{1 \le j < n/2} \{\sigma_{j/n}(J_{i+1})\}$  for all $i \ge 1$.  Then for some infinite   set of positive integers $\cala$,  the set of knots $\{P(J_\alpha)\}_{\alpha \in \cala}$ is linearly independent in the concordance group.
\end{theorem}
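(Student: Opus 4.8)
The plan is to establish the equivalent statement that, for a suitably chosen infinite set $\cala\subset\Z_{>0}$, no nontrivial integral linear combination $\#_{i=1}^{r}c_{i}P(J_{\alpha_{i}})$ (with $c_{i}\neq 0$, $\alpha_{1}<\cdots<\alpha_{r}$, all $\alpha_{i}\in\cala$, where a negative $c_{i}$ means copies of the concordance inverse) is topologically slice. The first step is structural: since $P$ has even winding number, the axis $V$ lifts to the pair $V_{1}(P),V_{2}(P)$ in $M_{2}(P(U))$, the preimage of $S^{3}\setminus\nu(J)$ is two copies of that exterior, and $M_{2}(P(J))$ is obtained from $M_{2}(P(U))$ by infecting along $V_{1}(P)$ and $V_{2}(P)$ by $J$. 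Infection along a knot does not change first homology, so $H_{1}(M_{2}(P(J)))\cong H_{1}(M_{2}(P(U)))\cong\Z_{n}$ with the same nonsingular linking form $\lambda_{0}$; the covering involution acts by $-1$ on $H_{1}$, so $[V_{2}(P)]=-[V_{1}(P)]$, and by hypothesis both generate $\Z_{n}$. Hence for a connected sum as above, the first homology of the double branched cover is $B_{1}\oplus\cdots\oplus B_{r}$ with $B_{i}=(\Z_{n})^{|c_{i}|}$, carrying the linking form $\bigoplus_{i}\operatorname{sgn}(c_{i})\,\lambda_{0}^{\oplus|c_{i}|}$.

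Next I would bring in Casson--Gordon theory, which obstructs topological local flat sliceness. Suppose $L=\#_{i}c_{i}P(J_{\alpha_{i}})$ is topologically slice. If the linking form above is not metabolic then $L$ is not slice and we are done; otherwise there is a metabolizer $H\subseteq H_{1}(M_{2}(L))$ for it such that the Casson--Gordon signature invariant $\sigma(L,\chi)$ vanishes for every prime-power-order character $\chi$ vanishing on $H$. This invariant is additive under connected sum, and, by Litherland's satellite formula, for a character $\chi$ on $H_{1}(M_{2}(P(J)))=\Z_{n}$ one has
\[
\sigma(P(J),\chi)=\sigma(P(U),\chi)+c_{\chi}(J),
\]
where $c_{\chi}(J)$ is a sum of Levine--Tristram signatures $\sigma_{k/d}(J)$ at roots of unity of order $d\mid n$ determined by $\chi$ through the generating classes $\pm[V_{1}(P)]$; in particular, once the Levine--Tristram signatures of $J$ are positive throughout $1\le j<n/2$, each $c_{\chi}(J)$ with $\chi$ nontrivial is a sum of positive terms lying between $\min_{1\le j<n/2}\sigma_{j/n}(J)$ and $n\cdot\max_{1\le j<n/2}\sigma_{j/n}(J)$.

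Now I would construct $\cala$ recursively. Put $C_{0}=\max_{\psi}|\sigma(P(U),\psi)|$, a finite constant. Using that the Levine--Tristram signatures of the $J_{i}$ tend to $+\infty$ along the disjoint increasing intervals furnished by the hypothesis, choose $\alpha_{1}<\alpha_{2}<\cdots$ so that at each stage the signature data of $J_{\alpha_{s}}$ exceeds $C_{0}$ and dwarfs, by arbitrarily large prescribed factors, all of the signature data of $J_{\alpha_{1}},\dots,J_{\alpha_{s-1}}$ together with $C_{0}$. Given a hypothetically slice $L=\#_{i=1}^{r}c_{i}P(J_{\alpha_{i}})$ with metabolizer $H$, write, by additivity and the satellite formula,
\[
\sigma(L,\chi)=\sum_{i=1}^{r}\operatorname{sgn}(c_{i})\sum_{j\,:\,\chi_{ij}\neq 0}\bigl(\sigma(P(U),\chi_{ij})+c_{\chi_{ij}}(J_{\alpha_{i}})\bigr),
\]
where $\chi_{ij}$ is the restriction of $\chi$ to the $j$-th $\Z_{n}$-summand of $B_{i}$, and identify the characters vanishing on $H$ with the elements of $H$ via the linking form. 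I would then produce a prime-power character $\chi$ vanishing on $H$ for which the contribution of the top block $B_{r}$ is nonzero and, because its nonzero summands share a common sign and have absolute value at least $\min_{1\le j<n/2}\sigma_{j/n}(J_{\alpha_{r}})-C_{0}$ while every other block contributes at most $n\cdot\max_{1\le j<n/2}\sigma_{j/n}(J_{\alpha_{r-1}})+C_{0}$ per nonzero summand, strictly dominates in absolute value the combined contribution of all other blocks. Then $\sigma(L,\chi)\neq 0$, contradicting sliceness.

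The crux, and the reason a recursive rather than a uniform choice of $\cala$ is needed, is the production of this character $\chi$ against an arbitrary metabolizer when the coefficients $c_{i}$ are unconstrained. When $|c_{r}|>\sum_{i<r}|c_{i}|$ it is immediate: $|H|=n^{(\sum|c_{i}|)/2}<n^{|c_{r}|}=|B_{r}|$, so $H$ cannot surject onto $B_{r}$, and one takes $\chi$ supported entirely on $B_{r}$, nontrivial, and of order a prime dividing $|B_{r}/\pi_{B_{r}}(H)|$; then only the $B_{r}$-block appears in the displayed sum and the conclusion is clear provided $\min_{j}\sigma_{j/n}(J_{\alpha_{r}})>C_{0}$. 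The remaining case $|c_{r}|\le\sum_{i<r}|c_{i}|$ is the delicate one: here $H$ may surject onto $B_{r}$, and one must instead count carefully how the nonzero coordinates of elements of $H$ distribute across the blocks in order to locate a character seen by $B_{r}$ but supported on only boundedly many earlier summands relative to the enormous signature gap built into $\cala$. Controlling this interaction is the technical heart of the argument; the remaining ingredients — the branched-cover identification, invariance of $H_{1}$ under infection, Litherland's formula, and additivity of the Casson--Gordon invariant — are standard.
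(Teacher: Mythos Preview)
Your setup is correct: the branched-cover identification, invariance of $H_1$ under infection, Litherland's satellite formula, and additivity of the Casson--Gordon invariant are the right ingredients, and your normalization $\min_j \sigma_{j/n}(J_{\alpha_1})>C_0$ is exactly what is needed to make every nonzero summand positive. But the argument is genuinely incomplete at the point you yourself flag. In the case $|c_r|\le\sum_{i<r}|c_i|$ the ``top block dominates'' strategy cannot be made to work as stated: the gaps in $\cala$ have to be fixed before the coefficients $c_i$ are known, and those coefficients are unbounded, so for any choice of $\cala$ a relation with, say, $|c_{r-1}|$ sufficiently large makes the $B_{r-1}$-contribution overwhelm the $B_r$-contribution. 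There is no reason an arbitrary metabolizer should contain an element with nontrivial $B_r$-coordinate and only boundedly many other nonzero coordinates, and you have not produced one.

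The paper bypasses this difficulty by arguing from the \emph{minimum} rather than the maximum, and by splitting according to the sign of the coefficient rather than by the index $\alpha_i$. Write the relation as $\mathbf{K}=\bigl(\cs_{i=1}^{m}P(K_i)\bigr)\cs-\bigl(\cs_{j=1}^{n}P(L_j)\bigr)$ with $\{K_i\}\cap\{L_j\}=\emptyset$, and split $\hp(\mathbf{K})=\hp^1\oplus\hp^2$ accordingly. The positivity you already arranged shows that any metabolizer element of the form $(0,\chi_2)$ with $\chi_2\ne 0$ has invariant a nonempty sum of terms of a single sign, hence nonzero; so $\calm_{(p)}$ meets each factor trivially and both projections $\calm_{(p)}\to\hp^1$, $\calm_{(p)}\to\hp^2$ are injective. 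Comparing orders then forces both to be bijections, $m=n$, and in particular the projection $\calm_{(p)}\to\hp^1$ is \emph{onto}. Now take the character $(\chi,0,\dots,0)\in\hp^1$ supported on the single summand $K_1$ that realizes the global minimum of $\ot_{P(U)}(\chi)+2\sigma_{\chi(V_1(P))}(J)$ over all nonzero $\chi$ and all $J$ appearing in the relation. Since the projection is onto, this extends to a metabolizer element $(\chi,0,\dots,0)\oplus(\chi_1',\dots,\chi_n')$. The $\hp^1$-side contributes exactly the minimum positive value, while each nonzero $\chi_j'$ on the $\hp^2$-side contributes a strictly larger value (no $L_j$ attains the minimum, since $\{K_i\}\cap\{L_j\}=\emptyset$), with the opposite sign. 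Whether all $\chi_j'$ vanish or not, the total is nonzero. That surjectivity-of-projection step is the missing idea; once you have it, no growth-rate bookkeeping on the $c_i$ is needed at all.
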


\smallskip

\noindent{\bf Example.}   Let $P(a,b)$ denote the $a$--full twisted Whitehead pattern with the clasp having $b$--full twists.  Thus, the standard  untwisted Whitehead double is $P(0,\pm 1)$.  Casson and Gordon's original work~\cite{MR900252} showed that $P(k(k+1), -1)(U)$ is algebraically slice for all $k \ge 0$, but is slice if and only if $k = 0$ or $k = 1$.  These are all {\it rational unknotting number} one as defined in~\cite{2022arXiv220907512D}.

According to~\cite{MR593626}, the 2--fold branched cover $M_2(P(a,b)(U))$ is built as surgery on the Hopf link with surgery coefficients $2a$ and $2b$.  If the meridians of the Hopf link are denoted $m_1$ and $m_2$, then the $m_i$ generate the first homology and satisfy $m_2 + 2a m_1 = 0$ and $m_1 + 2bm_2 = 0$.    If follows that the order of the first homology is $|4ab -1|$.  The first relation expresses $m_2$ in terms of  $m_1$, so we see that the first homology is cyclic, generated by $m_1$.

The curves $V_1(P)$ and $V_2(P)$ are both parallel to the  meridian $m_1$.  Thus, $P(a,b)$  satisfies the conditions of the theorem as long as $ab\ne 0$.    (The knot $P(a,0)$ is trivial.  In the  case of $P(0,b)$ the Alexander polynomial is trivial and $P(0,b)(K)$ is topologically slice for all $K$.  In~\cite{2022arXiv220907512D} it was shown that for $b \ne 0$, $P(0,b)$ is rank-expanding in the smooth setting.)

To apply Theorem~\ref{thm:main} we require a family $\calj$.  This is provided by multiples of the torus knot $T(2,n)$ for some large $n$.  The signature $\sigma_x(T(2,n))$ is positive if $x> 1/2n$.  It follows that $P(a,b)$ is rank-expanding, as demonstrated by considering the image of multiples of  $T(2, |4ab -1|)$.

\smallskip

\noindent{\bf Comment 1.}   The condition that $V_1(P)$ generates the first homology can be weakened,  but some condition is clearly necessary.  This condition rules out examples for which $P$ is contained in a three-ball, in which case the theorem wouldn't hold.

\smallskip
\noindent{\bf Comment 2.}  If $| 4ab -1|$ is a perfect square, then $P(a,b)$ is algebraically  slice and, since the winding number is 0, all satellites formed using such $P(a,b)$ as the pattern are algebraically slice.

\section{Preliminary lemma}

\begin{lemma} \label{lem:lemma1}  With $P$ as in the main theorem, for every knot $K$,  $H_1(M_2(P(K))) \cong \Z_n$.  There is a canonical isomorphism $H_1(M_2(P(U)) \cong  H_1(M_2(P(K))$.\end{lemma}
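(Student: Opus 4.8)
The plan is to cut $M_2(P(K))$ along a torus into a piece built only out of the pattern $P$ together with two copies of the knot exterior $E_K = S^3\setminus\nu(K)$, and then to compute $H_1$ by Mayer--Vietoris, noting that gluing in $E_K$ affects first homology in exactly the way gluing in a solid torus would. Since the ``pattern piece'' is literally independent of $K$, this yields both assertions of the lemma at once.

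First I would set up the decomposition. Write $X = (S^1\times B^2)\setminus\nu(P)$ and $T = \partial(S^1\times B^2) = \partial\nu(K)$, so that $S^3\setminus\nu(P(K)) = X\cup_T E_K$, where the satellite gluing sends $\{pt\}\times\partial B^2$ to the meridian $\mu_K$ and $S^1\times\{pt'\}$ to the zero--framed longitude $\lambda_K$. The branched double cover is classified by the unique surjection $\phi$ of $H_1(S^3\setminus\nu(P(K)))\cong\Z$ onto $\Z_2$. The crucial point is that $\phi$ kills both curves of $T$: the curve $\{pt\}\times\partial B^2$ bounds the disk $\{pt\}\times B^2$, which meets $P$ in $w$ points, where $w$ is the winding number, so it carries the class $w \bmod 2 = 0$ because $w$ is even; and $S^1\times\{pt'\}$, which the gluing carries to $\lambda_K$, bounds a Seifert surface pushed into $E_K$ and disjoint from $P(K)$, so it carries class $0$ as well. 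Hence $\phi$ restricts trivially to $T$ and --- since a meridian of $K$ generates $H_1(E_K)\cong\Z$ --- also to $E_K$, whereas $\phi$ is nontrivial on $X$ since it hits the meridian of $P$. Consequently the preimage of $S^1\times B^2$ in $M_2(P(K))$ is connected: it is the double cover $W_P$ of $S^1\times B^2$ branched over $P$, a manifold depending only on $P$, with $\partial W_P$ a union of two tori; and the preimage of $E_K$ is two disjoint copies of $E_K$, one attached to each component of $\partial W_P$ by the fixed satellite gluing. For $K = U$ this is precisely the corresponding decomposition of $M_2(P(U))$, with the two copies of $E_K$ replaced by the two solid tori $\nu(V_1(P))$ and $\nu(V_2(P))$; and the inclusion $W_P\hookrightarrow M_2(P(K))$ agrees with $W_P\hookrightarrow M_2(P(U))$, since the homomorphism $H_1(X)\to\Z$ that restricts $\phi$ records linking numbers with $P$, which depend only on the pair $(S^1\times B^2,P)$.

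Next I would feed the decomposition into the Mayer--Vietoris sequence. On each component of $\partial W_P$ the lift of $S^1\times\{pt'\}$ maps into $E_K$ to the nullhomologous longitude $\lambda_K$, while the lift of $\{pt\}\times\partial B^2$ maps to a generator of $H_1(E_K)\cong\Z$ --- precisely the behaviour of a longitude and a meridian when a solid torus is glued in, and hence the same for every $K$, in particular for $K=U$. The $H_1(E_K\sqcup E_K)$ summand is thus absorbed, and the sequence collapses to the statement that the inclusion induces a surjection $H_1(W_P)\twoheadrightarrow H_1(M_2(P(K)))$ whose kernel is the subgroup of $H_1(W_P)$ generated by the classes of the two lifts of $S^1\times\{pt'\}$. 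That subgroup does not depend on $K$, and the identical description holds for $K=U$; so $H_1(M_2(P(K)))$ and $H_1(M_2(P(U)))$ are both canonically identified with $H_1(W_P)$ modulo this one fixed subgroup, which is the asserted canonical isomorphism. Since $H_1(M_2(P(U)))\cong\Z_n$ by hypothesis, $H_1(M_2(P(K)))\cong\Z_n$ as well.

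The only step requiring genuine care is the setup in the second paragraph: fixing the framing curves and the satellite gluing, and verifying that $\phi$ restricts trivially to $E_K$ (equivalently to $T$). This is exactly where evenness of the winding number enters, and it is what makes $W_P$ a well-defined, $K$--independent piece of the decomposition of $M_2(P(K))$ with two boundary tori. Once that is in place, the Mayer--Vietoris bookkeeping is routine and its independence of $K$ is transparent.
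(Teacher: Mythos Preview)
Your proof is correct and follows essentially the same route as the paper's. Both arguments identify the common piece $W_P = M_2(P(U))\setminus\{\nu(V_1)\cup\nu(V_2)\}$ (your branched double cover of $S^1\times B^2$ over $P$), observe that replacing solid tori by knot exteriors $E_K$ leaves the Mayer--Vietoris computation unchanged, and conclude that $H_1(M_2(P(K)))$ is $H_1(W_P)$ modulo the subgroup generated by the two lifts of $S^1\times\{pt'\}$---which the paper calls the meridians of $V_1$ and $V_2$. The only real difference is that you spell out explicitly why the covering homomorphism $\phi$ vanishes on $T$ and hence on $E_K$ (this is where the even winding number enters), whereas the paper leaves that step implicit in the sentence ``$M_2(P(K))$ is built from $M_2(P(U))$ by removing $\nu(V_1(P))$ and $\nu(V_2(P))$ and replacing them with copies of the complement of $K$.''
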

\begin{proof}

The knot $P(K)$ is formed from $P(U)$ by removing $\nu(V)$ and replacing it with a copy of the complement of $K$.  Thus, the space $M_2(P(K))$ is built from $M_2(P(U))$ by removing $\nu(V_1(P))$ and $\nu(V_2(P))$ and replacing them with copies of the complement of $K$,  $S^3 \setminus \nu(K)$.

Recall that  $H_1(S^3 \setminus \nu(K)) \cong \Z$.  A Mayer-Vietoris argument shows that  $H_1(M_2(P(U)))$ and $H_1(M_2(P(K)))$ are both formed as the quotient of   $H_1(M_2(P(U)) \setminus \{\nu(V_1(P)) \cup \nu(V_2(P))\})$  by the subgroup generated by the meridians of $V_1$ and $V_2$.  This provides the canonical isomorphism.
\end{proof}

\section{Casson-Gordon invariant results}

  For any knot $K$ and prime $p$ we let $\calh_{(p)}(K)$ denote the $p$--primary summand of $H^1(M_2(K), \Q/\Z)$.  Elements of  $\calh_{(p)}(K)$ can be thought of as homomorphisms of $H_1(M_2(K))$ to $\Q/\Z$ of order a power of $p$.  Using duality (formally, the linking form on $H_1(M_2(K))$), the group $\calh_{(p)}(K)$ can  be identified  with  $H_1(M_2(K))_{(p)}$,   the $p$--primary part of the first homology.

Let $\chi \in  \calh_{(p)}(K)$.    We set  $\ot_K(\chi) = \tau_K(\chi) - \tau_K(0) \in \Q$, where $\tau_K$ is the Casson-Gordon invariant defined in~\cite{MR900252}.  It is evident that  $\ot_K(0) = 0$.  Two basic properties are  that $\ot_K(-\chi) = \ot_K(\chi)$ and that $\ot_K$ is additive under connected sum~\cite{MR711523}.


Here is a result of Gilmer~\cite{MR711523} and  Litherland~\cite{MR780587}.  In the statement, the value of the subscript   $\chi(V_1(P))$  in $\sigma_{\chi(V_1)}$ is viewed as an  element in $\Q/\Z$ using the natural embedding $\Z_n \subset \Q/\Z$.

\begin{theorem} Let  $P\subset S^1 \times B^2$ be as above and let $\chi \in \calh_{(p)}(K)$.  For all knots $K$ \[
\ot_{P(K)}(\chi)  = \ot_{P(U)}(\chi) + 2\sigma_{\chi(V_1(P))}(K),\] where $V_1(P)$ is defined above.
\end{theorem}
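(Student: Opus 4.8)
The plan is to recognize $M_2(P(K))$ as obtained from $M_2(P(U))$ by a pair of infections and to track the Casson--Gordon invariant under this operation. By Lemma~\ref{lem:lemma1} we regard $\chi$ as a character on the common group $H_1(M_2(P(U))) \cong H_1(M_2(P(K))) \cong \Z_n$. As noted in the proof of that lemma, $M_2(P(K))$ is built from $M_2(P(U))$ by deleting the solid tori $\nu(V_1(P))$ and $\nu(V_2(P))$ and gluing in two copies of the exterior $X_K = S^3 \setminus \nu(K)$, matching in each case the meridian and longitude of $V_i(P)$ with the meridian and longitude of $K$. In the terminology of~\cite{MR780587}, $M_2(P(K))$ is the result of \emph{infecting} $M_2(P(U))$ along the two curves $V_1(P)$, $V_2(P)$ using the knot $K$, and the goal is to compute the resulting change in $\ot$.

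First I would fix a positive integer $r$ and a $4$--manifold $W_0$ with $\partial W_0 = r\,M_2(P(U))$ over which the coefficient system determined by $r\chi$ extends; such a $(W_0, \tilde\chi_0)$ exists because the relevant bordism group (degree-$3$ bordism of the classifying space for the data ``double branched cover together with a $\Z_{p^a}$ character'') is finite --- this is where the hypothesis that $\operatorname{ord}(\chi)$ is a prime power enters, exactly as in~\cite{MR900252}. With such a choice,
\[
\ot_{P(U)}(\chi) \;=\; \tfrac1r\bigl(\operatorname{sign}^{\tilde\chi_0}(W_0) - \operatorname{sign}(W_0)\bigr),
\]
the subtraction of $\tau_{P(U)}(0)$ built into $\ot$ having absorbed the correction coming from the branched--cover structure, which depends on $P(U)$ but not on $\chi$. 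Next I would enlarge $W_0$ to a $4$--manifold $W$ with $\partial W = r\,M_2(P(K))$ by gluing, along each of the $r$ copies of $\nu(V_i(P))$ in $\partial W_0$, a standard piece $B_K$ manufactured from a Seifert surface of $K$ pushed into $B^4$ (equivalently, from the trace of the $0$--framed infection along $V_i(P)$), arranged so that the boundary changes as prescribed and so that $\tilde\chi_0$ extends over $W$; the extension is possible because $\chi$ takes the root-of-unity value $\chi(V_i(P)) \in \Z_n \subset \Q/\Z$ on each infection curve and, after passing to the $r$--fold multiple, the obstruction to extending the coefficient system over $B_K$ vanishes.

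By Novikov additivity, applied to both the ordinary and the $\tilde\chi$--twisted intersection forms, the signatures of $W$ split as a contribution from $W_0$ plus one from each $B_K$:
\[
\operatorname{sign}^{\tilde\chi}(W) - \operatorname{sign}(W) \;=\; \bigl(\operatorname{sign}^{\tilde\chi_0}(W_0) - \operatorname{sign}(W_0)\bigr) \;+\; r\!\!\sum_{i=1,2}\!\bigl(\operatorname{sign}^{\tilde\chi}(B_K) - \operatorname{sign}(B_K)\bigr).
\]
The crux is the evaluation of the local term $\operatorname{sign}^{\tilde\chi}(B_K) - \operatorname{sign}(B_K)$, and here I would invoke the computation of Gilmer~\cite{MR711523} and Litherland~\cite{MR780587} --- ultimately a consequence of the Atiyah--Singer $G$--signature theorem, or of a direct diagonalization of the twisted intersection form on the cyclic branched cover of $B^4$ along the pushed-in surface --- identifying this defect with the Levine--Tristram signature $\sigma_{\chi(V_i(P))}(K)$, the subscript naming the root of unity $e^{2\pi i\,\chi(V_i(P))}$. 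Finally, the covering involution $\iota$ of $M_2(P(U))$ interchanges $V_1(P)$ and $V_2(P)$, and since $n = |\det(P(U))|$ is odd one has $\iota_* = -\operatorname{id}$ on $H_1$, so $\chi(V_2(P)) = -\chi(V_1(P))$; as $\sigma_x(K) = \sigma_{-x}(K)$, the two local terms agree and sum to $2\sigma_{\chi(V_1(P))}(K)$. Dividing through by $r$ and comparing with the displayed formula for $\ot_{P(U)}(\chi)$ yields the claimed identity.

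I expect the main obstacle to be the construction of the capping pieces $B_K$: they must simultaneously change the boundary in the prescribed way, carry a compatible extension of the twisted coefficient system (forcing care about precisely which bordism group one works in, and using the prime-power hypothesis), and have twisted signature defect exactly equal to the Levine--Tristram signature. That last point is the genuinely computational input, where the eigenspace decomposition of the branched-cover signature under the deck action produces the term $\sigma_{\chi(V_1(P))}(K)$ and, after accounting for the two preimages of $V$, the factor $2$.
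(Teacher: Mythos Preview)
The paper does not supply its own proof of this theorem: it is stated as a result of Gilmer~\cite{MR711523} and Litherland~\cite{MR780587} and simply cited. Your sketch is essentially the argument one finds in those references --- view $M_2(P(K))$ as $M_2(P(U))$ infected along the two lifts $V_1(P),V_2(P)$, build a bounding $4$--manifold over which the character extends, apply Novikov additivity, and identify each local defect with a Levine--Tristram signature. The observation that the deck involution sends $V_1(P)$ to $V_2(P)$ and acts as $-1$ on $H_1$ (legitimate since $n=|\det(P(U))|$ is odd), together with $\sigma_x(K)=\sigma_{-x}(K)$, correctly accounts for the factor~$2$. So your proposal is in line with the cited sources; there is nothing further in the paper to compare against.
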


The main Casson-Gordon theorem concerning slice knots implies the following when restricted to the case of interest here.

\begin{theorem}  If $K$ is slice and $\calh_{(p)}(K) \cong (\Z_{p^k})^{n}$, then $kn$ is even and  there is an order $p^{kn/2}$ subgroup   $\calm_{(p)}\subset \calh_{(p)}(K)$  satisfying $\ot_K(\chi) = 0$ for all $\chi \in \calm_{(p)}$.
\end{theorem}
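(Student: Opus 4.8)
The plan is to combine the standard Casson--Gordon slice obstruction with a short order computation for finite $p$--groups. Assume $K$ is slice and fix a slice disk $D\subset B^4$. Let $W$ denote the $2$--fold cyclic cover of $B^4$ branched over $D$, so that $\partial W = M_2(K)$. Since $D$ is a disk, $W$ is a rational homology $4$--ball: $H_*(W;\Q)\cong H_*(\mathrm{pt};\Q)$, and in particular $H_1(W;\Z)$ is finite.

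First I would recover the Casson--Gordon metabolizer. Applying the ``half lives, half dies'' principle to the linking form $\lambda$ on $H_1(M_2(K);\Z)$ and the rational homology ball $W$ shows that
\[
L \defeq \ker\bigl(H_1(M_2(K);\Z)\to H_1(W;\Z)\bigr)
\]
satisfies $L = L^{\perp}$ with respect to $\lambda$. Because $\lambda$ splits as an orthogonal sum over the primes, $L_{(p)}\defeq L\cap H_1(M_2(K);\Z)_{(p)}$ is self-annihilating inside $H_1(M_2(K);\Z)_{(p)}$. The hypothesis $\calh_{(p)}(K)\cong(\Z_{p^k})^{n}$, together with the duality identification $\calh_{(p)}(K)\cong H_1(M_2(K);\Z)_{(p)}$, gives $|H_1(M_2(K);\Z)_{(p)}| = p^{kn}$; since the annihilator of a subgroup under a nonsingular linking form has complementary order, the relation $L = L^{\perp}$ forces $|L_{(p)}|^2 = p^{kn}$. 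Hence $kn$ is even and $|L_{(p)}| = p^{kn/2}$. Let $\calm_{(p)}\subset\calh_{(p)}(K)$ be the subgroup corresponding to $L_{(p)}$ under the duality identification.

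Next I would check that every $\chi\in\calm_{(p)}$ extends over $W$ and then apply the vanishing theorem. Regarding $\chi$ as a homomorphism $H_1(M_2(K);\Z)\to\Q/\Z$, the identity $L_{(p)} = L_{(p)}^{\perp}$ shows $\chi$ vanishes on $L_{(p)}$; as $\chi$ has $p$--power order it also kills the prime-to-$p$ part of $L$, so $\chi|_L = 0$. Therefore $\chi$ factors through $H_1(M_2(K);\Z)/L\hookrightarrow H_1(W;\Z)$ and, $\Q/\Z$ being injective as a $\Z$--module, extends to a character of $H_1(W;\Z)$. This is exactly the situation in which the Casson--Gordon slice theorem~\cite{MR900252}, in the normalized form of Gilmer~\cite{MR711523} and Litherland~\cite{MR780587}, yields $\ot_K(\chi) = 0$: the invariant is computed as a difference of a $\chi$--twisted signature and the ordinary signature of (a suitable number of copies of) $W$, both of which vanish because $W$ has the rational homology of a point. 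Running this over all $\chi\in\calm_{(p)}$ completes the argument.

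The genuine obstacle is this last step. One must confirm that the definition of $\tau_K(\chi)$ in force here --- hence of $\ot_K(\chi) = \tau_K(\chi)-\tau_K(0)$ --- is precisely the one for which the Casson--Gordon vanishing theorem is stated, and then handle the standard but delicate bookkeeping: passing to enough disjoint copies of $W$ so that the extended character lifts to the cyclic cover used in the definition of $\tau$, verifying that the twisted intersection form of a rational homology ball has vanishing signature, and checking that subtracting $\tau_K(0)$ is exactly what promotes the equality $\tau_K(\chi) = \tau_K(0)$ to the clean vanishing $\ot_K(\chi) = 0$. The homogeneity hypothesis $\calh_{(p)}(K)\cong(\Z_{p^k})^{n}$ enters only to make $\calm_{(p)}$ a subgroup of the stated order $p^{kn/2}$; without it one still obtains a metabolizer, but not a conclusion of this shape.
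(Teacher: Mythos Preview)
The paper does not actually prove this theorem: it is stated without proof, introduced only by the sentence ``The main Casson--Gordon theorem concerning slice knots implies the following when restricted to the case of interest here.''  So there is no argument in the paper to compare against; the result is being quoted from~\cite{MR900252, MR711523, MR780587}.

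Your sketch is the standard derivation and is essentially correct.  Two small comments.  First, the order computation does not require the homogeneity hypothesis $\calh_{(p)}(K)\cong(\Z_{p^k})^n$ to conclude that $|L_{(p)}|^2 = |H_1(M_2(K))_{(p)}|$; that follows from $L=L^\perp$ and nondegeneracy alone.  The homogeneity is used only to phrase the order as $p^{kn/2}$, as you note.  Second, the step you flag as the ``genuine obstacle'' is exactly right to worry about: in the original Casson--Gordon formulation the conclusion for $\chi$ in the metabolizer is that a certain Witt class vanishes, and translating this into the vanishing of the particular rational invariant $\ot_K(\chi)=\tau_K(\chi)-\tau_K(0)$ used here requires the normalizations of Gilmer and Litherland.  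Your explanation --- both the twisted and untwisted signatures of $W$ vanish because $W$ is a rational homology ball, so their difference is zero --- is the correct heuristic, and the references the paper cites make it precise.
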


Applying  this to the case of linear combinations of the knots we are considering yields the following.

\begin{corollary}\label{thm:gillith}
Let  $P\subset S^1 \times B^2$ be as above   and let $\{K_i\}$ be a sequence of knots for which $\calh_{(p)}(K) \cong \Z_{p^k}$.   If    the connected sum ${\bf K} = \cs_i^{n} P(K_i)$ is slice, then  there is an order $p^{nk/2}$ subgroup  $\calm_{(p)} \subset \calh_{(p)} ({\bf K})$ with the property that if $(\chi_1, \ldots , \chi_{n}) \in \calm_{(p)}$, then
\[
\sum_{\alpha \in \cala}( \ot_{P(U)}(\chi_\alpha) + 2\sigma_{\chi_\alpha(V_1(P)) }(K_\alpha)  )= 0,
\]
where the parameter set  $\cala$ selects the knots $P(K_i)$  for which the corresponding $\chi_i \ne 0$.
\end{corollary}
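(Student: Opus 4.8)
The plan is to reduce the statement to the two Casson--Gordon theorems above together with the satellite formula and the additivity of $\ot$; the only real work is keeping track of how characters on the branched cover of $\bf K$ decompose.

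First I would analyze $\calh_{(p)}(\mathbf{K})$. Since the $2$--fold branched cover of a connected sum is the connected sum of the $2$--fold branched covers, $M_2(\mathbf{K}) = \cs_{i=1}^{n} M_2(P(K_i))$, and therefore $\calh_{(p)}(\mathbf{K}) \cong \bigoplus_{i=1}^{n} \calh_{(p)}(P(K_i))$. By Lemma~\ref{lem:lemma1} each $H_1(M_2(P(K_i)))$ is canonically isomorphic to $H_1(M_2(P(U))) \cong \Z_n$, so each $\calh_{(p)}(P(K_i))$ is canonically identified with $\calh_{(p)}(P(U))$ and, by the hypothesis on $\calh_{(p)}$ (equivalently, because the $p$--primary part of $\Z_n$ is $\Z_{p^k}$), is isomorphic to $\Z_{p^k}$. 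Hence $\calh_{(p)}(\mathbf{K}) \cong (\Z_{p^k})^{n}$, and a character $\chi \in \calh_{(p)}(\mathbf{K})$ is a tuple $(\chi_1,\dots,\chi_n)$ in which each $\chi_i$ is, via the canonical isomorphism, also a character on $H_1(M_2(P(U)))$; in particular $\chi_i(V_1(P))$ is well defined.

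Next, because $\mathbf{K}$ is slice, the second Casson--Gordon theorem above yields a subgroup $\calm_{(p)} \subset \calh_{(p)}(\mathbf{K})$ of order $p^{nk/2}$ (the exponent is an integer precisely because that theorem forces $nk$ to be even) with $\ot_{\mathbf{K}}(\chi) = 0$ for every $\chi \in \calm_{(p)}$. I would then expand $\ot_{\mathbf{K}}$ using its additivity under connected sum and the identity $\ot_{P(K_i)}(0)=0$: for $\chi = (\chi_1,\dots,\chi_n) \in \calm_{(p)}$,
\[
0 = \ot_{\mathbf{K}}(\chi) = \sum_{i=1}^{n} \ot_{P(K_i)}(\chi_i) = \sum_{\alpha \in \cala} \ot_{P(K_\alpha)}(\chi_\alpha),
\]
where $\cala = \{\, i : \chi_i \neq 0 \,\}$ and the summands with $\chi_i = 0$ drop out by $\ot_{P(K_i)}(0)=0$. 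Finally I would substitute the Gilmer--Litherland satellite formula $\ot_{P(K_\alpha)}(\chi_\alpha) = \ot_{P(U)}(\chi_\alpha) + 2\sigma_{\chi_\alpha(V_1(P))}(K_\alpha)$ into each remaining term, which produces exactly the displayed identity.

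The step I would flag as the one needing care is the bookkeeping of identifications: the direct-sum decomposition of $\calh_{(p)}(\mathbf{K})$ coming from the connected-sum splitting has to be matched with the canonical isomorphisms of Lemma~\ref{lem:lemma1}, so that the subscript $\chi_\alpha(V_1(P))$ in the satellite formula for $P(K_\alpha)$ refers to the same homomorphism on $H_1(M_2(P(U)))$ for every $\alpha$; once this is set up, the satellite formula applies to each summand verbatim and nothing further is required. A secondary point is verifying that the hypotheses of the second Casson--Gordon theorem genuinely hold, namely that $\calh_{(p)}(\mathbf{K})$ has the form $(\Z_{p^k})^{n}$ — this is exactly where the assumption on $\calh_{(p)}$, transported through Lemma~\ref{lem:lemma1}, is used, and it is also what makes $p^{nk/2}$ a legitimate order.
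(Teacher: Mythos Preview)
Your proposal is correct and follows exactly the route the paper indicates: the paper gives no explicit proof, merely stating that ``Applying this to the case of linear combinations of the knots we are considering yields the following,'' and your argument is precisely that application --- decompose $\calh_{(p)}(\mathbf{K})$ via the connected-sum splitting and Lemma~\ref{lem:lemma1}, invoke the Casson--Gordon slice theorem to obtain $\calm_{(p)}$, expand $\ot_{\mathbf{K}}$ by additivity, and substitute the Gilmer--Litherland satellite formula term by term. Your flagged bookkeeping point about matching the canonical identifications is well taken and is the only thing requiring care.
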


\noindent

\section{Linaer independence}  Here we prove the main theorem.

We start out by  replacing   $\calj$ with an infinite subset so that the following holds:
\[   0 < \ot(P(U), \chi) + 2 \sigma_{\chi(V_1(P))}(J_i) < \ot(P(U), \chi') + 2 \sigma_{\chi'(V_1(P))}(J_{i+1}) \]
for all $\chi \ne 0$, $\chi' \ne 0$, and   $i\ge 1$.  This is possible because there is only a finite set of values of $\ot(P(U), \chi) $ and the signature functions of the $J_i$ are assumed to be going to infinity as described in the statement of the theorem.

If the set of knots $  \{P(J_i)\}_{J_i \in \calj}$ is linearly dependent, we can find knots $K_i \in \calj$ and $L_i \in \calj$ such that following concordance relation holds.
\[  {\bf K} = \big(\cs_{i = 1}^m  P(K_i) \big) \cs  - \big(\cs_{j = 1}^n   P(L_i) \big) =  0.
\]
The $K_i$ need not be distinct  and the $L_i$ need not be distinct, but we can assume that $\{K_i\} \cap \{L_i\} $ is empty.
We can also assume that the indices for the $K_i$ and $L_i$ are such that they form nondecreasing sequences with respect to the ordering on $\calj$.

The group $\hp({\bf K})$ splits as $\hp^1 \oplus \hp^2$ where $\hp^1 = \hp (\cs_{i = 1}^m  P(K_i) )$ and  $\hp^2 = \hp (-\cs_{i = 1}^n  P(L_i)) $.  The orders of these summands are $p^{km}$ and $p^{kn}$, respectively.

Let $\calm^1_{(p)}$ and $\calm^2_{(p)}$ denote the image of the  projections of $\calm_{(p)}$ to $\hp^1$ and $\hp^2$, respectively.

\begin{lemma}\label{lem:lemma2}  For each $\chi_1 \in \calm^1_{(p)}$, there is a unique $\chi_2 \in \calm^2_{(p)}$ such that $(\chi_1, \chi_2) \in \calm_{(p)}$.
\end{lemma}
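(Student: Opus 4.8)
The plan is to get existence for free from the way $\calm^1_{(p)}$ and $\calm^2_{(p)}$ are defined, and to extract uniqueness from the positivity normalization imposed on $\calj$ at the start of this section. Write $\pi_1 \co \hp({\bf K}) \to \hp^1$ and $\pi_2 \co \hp({\bf K}) \to \hp^2$ for the two coordinate projections, so that $\calm^1_{(p)} = \pi_1(\calm_{(p)})$ and $\calm^2_{(p)} = \pi_2(\calm_{(p)})$ by definition. For existence: given $\chi_1 \in \calm^1_{(p)}$, choose any $(\chi_1,\chi_2) \in \calm_{(p)}$ whose first coordinate is $\chi_1$; then $\chi_2 = \pi_2(\chi_1,\chi_2)$ automatically lies in $\calm^2_{(p)}$, so such a $\chi_2$ exists.

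For uniqueness I would argue that the only element of $\calm_{(p)}$ with first coordinate $0$ is $(0,0)$; granting this, if $(\chi_1,\chi_2)$ and $(\chi_1,\chi_2')$ both lie in $\calm_{(p)}$ then, since $\calm_{(p)}$ is a subgroup, so does $(0,\chi_2 - \chi_2')$, forcing $\chi_2 = \chi_2'$. So it remains to show that $(0,\psi) \in \calm_{(p)}$ implies $\psi = 0$. Decompose $\psi = (\psi_1,\dots,\psi_n)$ under the splitting $\hp^2 = \hp\big(-\cs_{j=1}^{n} P(L_j)\big) = \bigoplus_{j=1}^n \hp(-P(L_j))$, and set $\cala' = \{\, j : \psi_j \ne 0 \,\}$; the goal is to show $\cala'$ is empty.

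Here is where I would invoke Corollary~\ref{thm:gillith}, applied (after a sign adjustment) to the slice knot ${\bf K}$ with the character $(0,\psi)$. The factors $P(K_i)$ carry the zero character and hence contribute nothing; the factors $-P(L_j)$ with $j\in\cala'$ each contribute the term $\ot_{P(U)}(\psi_j) + 2\sigma_{\psi_j(V_1(P))}(L_j)$, entering the Casson-Gordon relation with a common sign because those factors appear with reversed orientation in ${\bf K}$. Thus the corollary yields $\sum_{j\in\cala'}\big(\ot_{P(U)}(\psi_j) + 2\sigma_{\psi_j(V_1(P))}(L_j)\big) = 0$. But by the reduction made at the beginning of this section, each summand is strictly positive, and a nonempty sum of strictly positive real numbers cannot vanish; hence $\cala' = \emptyset$, that is $\psi = 0$, which completes the argument.

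The one genuinely delicate point is this application of Corollary~\ref{thm:gillith}: the corollary as stated concerns a connected sum of the $P(K_i)$ with a single orientation, whereas ${\bf K}$ also contains the $-P(L_j)$ with reversed orientation, so I would need to record how $\ot_{P(U)}$ and the Levine--Tristram signatures transform under orientation reversal and verify that all of the $L_j$--contributions enter the relation with the same sign. Once that bookkeeping is settled, the positivity step is immediate and the remainder of the proof is formal.
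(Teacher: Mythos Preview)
Your proof is correct and follows essentially the same route as the paper's: existence is immediate from the definition of the projections, and uniqueness comes from subtracting to obtain $(0,\chi_2-\chi_2')\in\calm_{(p)}$ and then noting that the resulting Casson--Gordon sum is a sum of strictly positive terms by the normalization of $\calj$. The orientation bookkeeping you flag is handled only implicitly in the paper's version as well.
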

\begin{proof}  Clearly, for each $\chi_1 \in \calm^1_{(p)}$ there exists an  $\chi_2 \in \calm^2_{(p)}$ such that $(\chi_1,\chi_2) \in \calm_{(p)}$.  Suppose that $(\chi_1, \chi_2) \in \calm_{(p)}$ and $(\chi_1, \chi_2') \in \calm_{(p)}$ for some $\chi_2 \ne \chi_2'$.  Then $(0, \chi_2 - \chi_2') \in \calm_{(p)}$.  This would imply the vanishing of a Casson-Gordon invariant $\ot(\cs L_i,   \chi_2 - \chi_2')$. By additivity, this is the the sum of  nonnegative numbers, at least one of which is   positive, and so cannot be 0.
\end{proof}

\begin{lemma}\label{lem:lemma3}  There is an equality of orders $\big|  \calm_{(p)} \big| = \big|  \calm^1_{(p)} \big| = \big|  \calm^2_{(p)}\big|  =  p^{nk/2}$.
\end{lemma}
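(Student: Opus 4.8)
The plan is to deduce the equality of the three orders from the injectivity of both coordinate projections on $\calm_{(p)}$, and then to read the common order off the Casson--Gordon slicing theorem. First I would record that the projection $\calm_{(p)}\to\hp^1$ is onto $\calm^1_{(p)}$ by the definition of $\calm^1_{(p)}$, and injective by Lemma~\ref{lem:lemma2}: were $(0,\psi)\in\calm_{(p)}$ with $\psi\ne 0$, then translating an arbitrary element of $\calm_{(p)}$ by $(0,\psi)$ would violate the uniqueness clause of that lemma. Hence $|\calm^1_{(p)}| = |\calm_{(p)}|$.

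The heart of the argument is the mirror of Lemma~\ref{lem:lemma2}, which I would prove next: for each $\chi_2\in\calm^2_{(p)}$ there is a \emph{unique} $\chi_1\in\calm^1_{(p)}$ with $(\chi_1,\chi_2)\in\calm_{(p)}$, so that the projection $\calm_{(p)}\to\calm^2_{(p)}$ is a bijection and $|\calm^2_{(p)}| = |\calm_{(p)}|$. Existence is immediate, $\calm^2_{(p)}$ being by definition the image of that projection. For uniqueness, suppose $(\chi_1,\chi_2)$ and $(\chi_1',\chi_2)$ both lie in $\calm_{(p)}$ and set $\eta = \chi_1-\chi_1'$; then $(\eta,0)\in\calm_{(p)}$, so the Casson--Gordon invariant $\ot(\cs_{i=1}^m P(K_i),\eta)$ vanishes. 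Since the factors $P(K_i)$ enter ${\bf K}$ with positive sign, additivity of $\ot$, the Gilmer--Litherland formula, and the normalization imposed at the start of this section rewrite this invariant as
\[ \sum_{\alpha}\big(\ot_{P(U)}(\eta_\alpha)+2\sigma_{\eta_\alpha(V_1(P))}(K_\alpha)\big), \]
the sum taken over those indices $\alpha$ with $\eta_\alpha\ne 0$; each such summand is strictly positive, and if $\eta\ne 0$ the index set is nonempty, so the sum cannot be $0$ -- a contradiction. Hence $\eta=0$.

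Finally I would pin the common order: ${\bf K}$ is slice and, by Lemma~\ref{lem:lemma1}, the double branched cover of every satellite summand $P(K_i)$ or $-P(L_j)$ of ${\bf K}$ has first homology $\Z_n$ with $p$--primary part $\Z_{p^k}$, so $\calh_{(p)}({\bf K})$ is a direct sum of one copy of $\Z_{p^k}$ per satellite summand; Corollary~\ref{thm:gillith} then produces the subgroup $\calm_{(p)}$ of the order recorded in the statement, and with the two preceding steps this yields $|\calm_{(p)}| = |\calm^1_{(p)}| = |\calm^2_{(p)}|$ equal to that order. I expect the only step with genuine content to be the second; it is a verbatim transcription of the proof of Lemma~\ref{lem:lemma2} with the $K_i$ playing the role of the $L_j$, and the one thing to keep straight is that the $K_i$ sit in ${\bf K}$ with the opposite (positive) sign -- which is precisely what makes the relevant Casson--Gordon contributions a nonempty sum of positive numbers. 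I therefore anticipate no real obstacle beyond this bookkeeping.
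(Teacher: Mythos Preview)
Your proposal is correct and follows essentially the same route as the paper: both arguments rest on Lemma~\ref{lem:lemma2} together with its mirror (which the paper dismisses with the word ``similarly'' and you write out in full), and both conclude by observing that the projections $\calm_{(p)}\to\calm^1_{(p)}$ and $\calm_{(p)}\to\calm^2_{(p)}$ are bijections. The only organizational difference is that the paper phrases the first step as constructing mutually inverse maps $\calm^1_{(p)}\leftrightarrow\calm^2_{(p)}$, whereas you go directly through the two projections; the content is identical.
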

\begin{proof} Lemma~\ref{lem:lemma2} provides an injection $\calm^1_{(p)}\to \calm^2_{(p)}$.  Similarly there is an injection  $\calm^2_{(p)}\to \calm^1_{(p)}$, implying that $\calm^1_{(p)}$ and $\calm^2_{(p)}$ have the same order.  The projection map $\calm_{(p)} \to \calm^1_{(p)}$ is by definition surjective and  Lemma~\ref{lem:lemma2}  implies it is injective.
\end{proof}

\begin{lemma}\label{lem:lemma4}  The are equalities
\[\big|  \calm^1_{(p)}  \big|  =  \big|  \hp^1 \big| = p^{nk/2} =  \big|  \hp^2 \big| =  \big|  \calm^2_{(p)}  \big|.\]
\end{lemma}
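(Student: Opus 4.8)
The plan is to obtain Lemma~\ref{lem:lemma4} as a counting consequence of Lemma~\ref{lem:lemma3} together with the sharp ``half-order'' property of the Casson--Gordon subgroup $\calm_{(p)}$. Two preliminary observations set this up. First, the trivial inclusions $\calm^1_{(p)}\subseteq\hp^1$ and $\calm^2_{(p)}\subseteq\hp^2$, which hold because $\calm^i_{(p)}$ is by definition the image of $\calm_{(p)}$ under the projection of $\hp({\bf K})=\hp^1\oplus\hp^2$ onto its $i$-th summand; in particular $|\calm^1_{(p)}|\le|\hp^1|$ and $|\calm^2_{(p)}|\le|\hp^2|$. Second, the identity $|\calm_{(p)}|^2=|\hp({\bf K})|$: by Lemma~\ref{lem:lemma1} each $P(K_i)$ and each $P(L_j)$ has $\hp(\,\cdot\,)\cong\Z_{p^k}$, so additivity of $\hp$ under connected sum gives $\hp({\bf K})\cong(\Z_{p^k})^{m+n}$, and the slice-knot Casson--Gordon theorem quoted above (as packaged in Corollary~\ref{thm:gillith}) produces a $\calm_{(p)}$ of order exactly $p^{(m+n)k/2}$, whose square is $|\hp({\bf K})|=|\hp^1|\cdot|\hp^2|$.

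Then I would simply chain these facts. Using $|\calm^1_{(p)}|=|\calm^2_{(p)}|=|\calm_{(p)}|$ from Lemma~\ref{lem:lemma3},
\[
|\hp^1|\cdot|\hp^2|\ \ge\ |\calm^1_{(p)}|\cdot|\calm^2_{(p)}|\ =\ |\calm_{(p)}|^2\ =\ |\hp({\bf K})|\ =\ |\hp^1|\cdot|\hp^2|,
\]
so every inequality is an equality. Since $|\calm^1_{(p)}|\le|\hp^1|$ and $|\calm^2_{(p)}|\le|\hp^2|$ hold separately, equality of the products forces equality in each, i.e. $|\calm^1_{(p)}|=|\hp^1|$ and $|\calm^2_{(p)}|=|\hp^2|$. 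Combined with Lemma~\ref{lem:lemma3} this yields the full chain $|\calm^1_{(p)}|=|\hp^1|=|\calm_{(p)}|=|\hp^2|=|\calm^2_{(p)}|$, which is the assertion of the lemma (the common order being $|\calm_{(p)}|=p^{nk/2}$, and, comparing with $|\hp^1|=p^{km}$ and $|\hp^2|=p^{kn}$, one also reads off $m=n$).

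I do not expect a real obstacle at this stage: the substantive work is already done in Lemmas~\ref{lem:lemma2} and~\ref{lem:lemma3}, where the positivity of the quantities $\ot_{P(U)}(\chi)+2\sigma_{\chi(V_1(P))}(J_i)$ was used to rule out nonzero elements of $\calm_{(p)}$ lying over $0$ in either factor. The only mild care needed is to invoke the Casson--Gordon theorem in its sharp form---``a subgroup of order exactly $p^{(m+n)k/2}$''---rather than merely a lower bound, which is precisely what is quoted. It is worth noting why the lemma matters for what follows: the equalities $|\calm^i_{(p)}|=|\hp^i|$ upgrade the inclusions $\calm^i_{(p)}\subseteq\hp^i$ to equalities $\calm^1_{(p)}=\hp^1$ and $\calm^2_{(p)}=\hp^2$, so that $\calm_{(p)}$ surjects onto each summand---the surjectivity the remainder of the proof of the main theorem will exploit.
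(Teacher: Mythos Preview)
Your argument is correct and is essentially the same counting argument as the paper's: both combine the containments $\calm^i_{(p)}\subseteq\hp^i$, the equalities $|\calm^i_{(p)}|=|\calm_{(p)}|$ from Lemma~\ref{lem:lemma3}, and the relation $|\calm_{(p)}|^2=|\hp^1|\cdot|\hp^2|$ to force $|\calm^i_{(p)}|=|\hp^i|$. The paper phrases this as ``one of $|\hp^1|,|\hp^2|$ is at most $p^{nk/2}$, but contains a subgroup of that order,'' whereas you phrase it as a symmetric product-equals-product argument; these are the same pigeonhole step, and your added remark that $m=n$ drops out is a nice clarification of the paper's implicit notational identification.
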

\begin{proof}   The product of the orders  $\big| \hp^1 \big|$ and $\big|  \hp^2 \big|$ is $p^{kn}$.  Thus, one of them, say $\hp^1$, is of order at most $p^{nk/2}$.  But $\hp^1$ contains the subgroup $\calm^1_{(p)}$ of order $p^{nk/2}$.  If follows that  $\big| \hp^1 \big| = p^{nk/2}$.  It now follows that we also have that  $\big| \hp^2 \big| = p^{nk/2}$.
\end{proof}

\begin{proof}[Conclusion of Proof of Main Theorem]  Choose a prime $p$ that divides $n$; this implies that $\calh_{p}(P(U)) \cong \Z_{p^k}$ for some $k \ge 1$.  Choose a knot $K$ among the $K_i$ or $L_i$ and a $\chi \ne 0 $ of order a power of $p$   for which $\ot(P(U), \chi) + 2\sigma_{\chi(V_1(P))}(K)$ has the minimum value among all such $\chi$.    This will be either $K_1$ or $L_1$.  Assume it is $K_1$.  The minimum might be achieved for different values of $\chi$, but it is not attained by any   $L_i$.  We denote one such minimizing $\chi$ by $\chi_1$

We have that $\hp   \subset  (\Z_{p^k})^{n} \oplus  (\Z_{p^k})^{n} $.   There exists an  element
\[  (\chi, 0 , \ldots 0) \oplus (\chi_1' , \cdots , \chi_{n/2}') \in \calm_{(p)} \]   for  some set of $\chi_i'$.  If all the $\chi_i'$ are 0, then the Casson-Gordon invariant associated to the element in the  left summand is positive and the invariant for the element in the right summand is 0, so they cannot be equal.  On the other hand, if any $\chi_i'$ is nonzero, then the Casson-Gordon invariant associated the element in the right summand is greater than that for the left summand.
\end{proof}

\bibliographystyle{amsplain}	
\bibliography{../../BibTexComplete}

\end{document}